\numberwithin{equation}{section}
\newtheorem{thm}{Theorem}[section]
\newtheorem{lem}[thm]{Lemma}
\newtheorem{prop}[thm]{Proposition}
 { \theoremstyle{definition}
\newtheorem{Remark}[thm]{Remark} }
\newcommand{\bbR}{\mathbb R}
\newcommand{\bbT}{\mathbb T}
\newcommand{\bbC}{\mathbb C}
\newcommand{\bbP}{\mathbb P}
\newcommand{\bbZ}{\mathbb Z}
\begin{document}

\allowdisplaybreaks

\newcommand{\arXivNumber}{1204.5701}

\renewcommand{\PaperNumber}{093}

\FirstPageHeading

\ShortArticleName{Orbital Linearization of Smooth Completely Integrable Vector Fields}

\ArticleName{Orbital Linearization of Smooth Completely\\ Integrable Vector Fields}

\Author{Nguyen Tien ZUNG~$^{\dag\ddag}$}

\AuthorNameForHeading{N.T.~Zung}

\Address{$^\dag$~School of Mathematics, Shanghai Jiao Tong University, \\
\hphantom{$^\dag$}~800 Dongchuan Road, Minhang District, Shanghai 200240, P.R.~China}

\Address{$^\ddag$~Institut de Math\'ematiques de Toulouse, UMR5219 CNRS, Universit\'e Paul Sabatier,\\
\hphantom{$^\dag$}~118 route de Narbonne, 31062 Toulouse, France}
\EmailD{\href{mailto:tienzung.nguyen@math.univ-toulouse.fr}{tienzung.nguyen@math.univ-toulouse.fr}}

\ArticleDates{Received July 04, 2017, in f\/inal form November 30, 2017; Published online December 12, 2017}

\Abstract{The main purpose of this paper is to prove the smooth local orbital linearization theorem for smooth vector f\/ields which admit a complete set of f\/irst integrals near a nondegenerate singular point. The main tools used in the proof of this theorem are the formal orbital linearization theorem for formal integrable vector f\/ields, the blowing-up method, and the Sternberg--Chen isomorphism theorem for formally-equivalent smooth hyperbolic vector f\/ields.}

\Keywords{integrable system; normal form; linearization; nondegenerate singularity}

\Classification{37G05; 58K50; 37J35}

\section{Introduction}

The main purpose of this paper is to show the following \emph{orbital linearization theorem} for smooth~($C^\infty$)
vector f\/ields which admit a complete set of f\/irst integrals near a nondegenerate singular point:

\begin{thm} \label{thm:SmoothLinearization}
Let $X$ be a smooth vector field in a neighborhood of $O = (0,\dots,0)$ in $\bbR^n$,
which vanishes at $O$ and satisfies the following conditions:
\begin{enumerate}\itemsep=0pt
\item[$i)$] $($complete integrability$)$: $X$ admits $n-1$ functionally independent smooth first integrals $F_1,\dots, F_{n-1}$,
i.e., $X(F_1) = \dots = X(F_{n-1}) = 0$ and ${\rm d}F_1 \wedge \dots \wedge {\rm d}F_{n-1} \neq 0$ almost everywhere;
\item[$ii)$] $($nondegeneracy~$1)$: the semisimple part of the linear part of $X$ at $O$ is non-zero, and the $\infty$-jets
of $F_1,\dots,F_{n-1}$ at $O$ are functionally independent $($i.e.,
the $\infty$-jet of ${\rm d}F_1 \wedge \dots \wedge {\rm d}F_{n-1}$ at $O$ is non-zero$)$;
\item[$iii)$] $($nondegeneracy~$2)$: If moreover $0$ is an eigenvalue of $X$ at $O$ with multiplicity $k \geq 1$,
then the differentials of the functions $F_1,\dots, F_k$ are linearly independent at $O$:
 ${\rm d}F_1(O) \wedge \dots \wedge {\rm d}F_k(O) \neq 0$. Then there exists a local smooth coordinate system $(x_1,\dots,x_n)$ in which $X$ can be written as
\begin{gather*}
X = F X^{(1)},
\end{gather*}
where $X^{(1)}$ is a semisimple linear vector field in $(x_1,\dots,x_n)$, and $F$ is a smooth first integral of $X^{(1)}$, i.e., $X^{(1)}(F) = 0$, with $F(O)=1$.
\end{enumerate}
\end{thm}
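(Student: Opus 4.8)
The plan is to follow the three-step scheme announced in the abstract: formal normalization, realization up to a flat error via Borel's theorem, and removal of the flat error by a blow-up combined with the Sternberg--Chen theorem.

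\emph{Formal step.} First I would invoke the formal orbital linearization theorem. Hypothesis (ii) (nonzero semisimple part $S$ of the linear part of $X$, and functional independence of the $\infty$-jets of $F_1,\dots,F_{n-1}$) together with (iii) guarantee that the $\infty$-jet of $X$ at $O$ is formally orbitally linearizable: there is a formal diffeomorphism $\hat\varphi$ fixing $O$ and a formal series $\hat F$ with $\hat F(O)=1$ such that $\hat\varphi_* X = \hat F\, X^{(1)}$, where $X^{(1)}=S$. By Borel's theorem I choose a genuine smooth coordinate change realizing $\hat\varphi$ and a smooth function $F$, $F(O)=1$, realizing $\hat F$. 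After this change the smooth fields $X$ and $Y:=F\,X^{(1)}$ agree to infinite order at $O$, i.e. $X=Y+R$ with $R$ flat. It then suffices to produce a smooth local diffeomorphism, tangent to the identity to infinite order at $O$, conjugating $X$ to $Y$; since $Y=F\,X^{(1)}$ already has the desired orbital-linear form, the theorem follows.

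\emph{The obstruction from zero eigenvalues.} The difficulty is that $X^{(1)}=S$, being semisimple but possibly having a zero eigenvalue of multiplicity $k$, is not hyperbolic, so Sternberg--Chen cannot be applied directly at $O$. Here the first integrals enter. By (iii) the differentials $dF_1,\dots,dF_k$ are independent at $O$, so these functions can be completed to a smooth coordinate system; since $X(F_i)=0$, the field $X$ is tangent to the common level fibration of $(F_1,\dots,F_k)$, whose fibres are transverse to $\ker S$. In the directions complementary to $\ker S$ the linear part is hyperbolic, so the genuinely non-hyperbolic behaviour is confined to the parametric directions controlled by the first integrals.

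\emph{Blow-up, Sternberg--Chen, and blow-down.} To handle the residual degeneracy and the flat remainder at once, I would blow up the origin. On the blow-up the exceptional divisor $E\cong\bbS^{n-1}$ carries the lifted fields $\tilde X$ and $\tilde Y$, which agree to infinite order along $E$ and whose transverse behaviour is hyperbolic by virtue of the nonzero eigenvalues of $S$ (this is where (ii), $S\neq0$, is essential). Since the two lifted fields are formally equal along $E$, the formal conjugacy between them may be taken to be the identity, and the Sternberg--Chen isomorphism theorem furnishes, locally along $E$, a smooth conjugacy tangent to the identity to infinite order along $E$. The complete set of lifted first integrals then supplies the compatibility needed to glue these local conjugacies into a single smooth diffeomorphism of a neighbourhood of $E$, commuting with the projection and infinitely tangent to the identity along $E$. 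Because it is infinitely tangent to the identity and respects the radial structure, it descends under blow-down to a smooth germ $\psi$ at $O$ with $\psi_*X=Y=F\,X^{(1)}$; composing $\psi$ with the coordinate change from the formal step yields the asserted coordinates. I expect the blow-up stage to be the main obstacle: verifying that the family of Sternberg--Chen conjugacies along the compact divisor $E$ can be chosen to depend smoothly on the base point and to patch, compatibly with the first-integral fibration, into one diffeomorphism that blows down to a genuine smooth germ at $O$.
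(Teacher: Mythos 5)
There is a genuine gap, and it lies exactly where the real difficulty of the theorem is. Your formal step plus Borel realization is the paper's Proposition \ref{prop:LinearizationUpToFlat}, and your reduction of the zero-eigenvalue directions to a parametrized family via condition iii) is the paper's Subsection \ref{subsection:ReductionToParametrizedCase}; both of those are fine. The flaw is your standing assumption that once the zero eigenvalues are factored out, ``the transverse behaviour is hyperbolic by virtue of the nonzero eigenvalues of $S$.'' That is false. By Lemma \ref{lem:HypOrEl} of the paper, integrability forces the eigenvalues to lie either all in $\lambda\bbZ$ (hyperbolic case) or all in $\sqrt{-1}\,\lambda\bbZ$ (elliptic case). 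In the elliptic case every eigenvalue is nonzero but purely imaginary, so the singularity has no hyperbolicity at all, and the Sternberg--Chen theorem --- whose hypothesis is that no eigenvalue has zero real part --- simply does not apply, whether at $O$ or along the exceptional divisor of a blow-up. Indeed this is not a technical obstacle but a substantive one: a flat perturbation of a linear center need not even have closed orbits (orbits can spiral), so no smooth conjugacy to $F X^{(1)}$ can be extracted from formal equivalence alone; the first integrals must be used dynamically, not just for bookkeeping.

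The paper's treatment of the elliptic case is therefore entirely different from your sketch. It first proves (Lemma \ref{lem:AllPeriodic}) that every orbit of $Y = X/F$ near $O$ is periodic with period close to $2\pi/\lambda$; this is where the first integrals do real work, via a Rolle-type argument on the non-flat Jacobian determinant $G$, Poincar\'e maps on small transversals, and an engulfing argument across the singular locus of $(F_1,\hdots,F_{n-1})$. The blow-up then enters for a purpose quite different from yours: not to create hyperbolicity for Sternberg--Chen, but to show that the period function of $Y$ is a smooth function of the form $2\pi/\lambda + flat$ (Lemma \ref{lem:blowup}). Rescaling $Y$ by the period produces a vector field generating a smooth $\bbT^1$-action, which is linearized by the Bochner--Cartan theorem for compact group actions, after which a final adjustment makes the factor a genuine first integral. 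In the hyperbolic case, by contrast, no blow-up is needed at all: Sternberg--Chen applies directly at $O$ to the pair $X = Y + flat$, $Y = FX^{(1)}$. So your proposal, as written, proves at most the hyperbolic case (and in a more roundabout way than necessary), while the elliptic case --- the heart of the theorem --- remains untouched, and the tool you propose for it cannot work.
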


The above theorem is in fact more than mere orbital linearization: not only that $X$ is orbitally equivalent to its linear part
$X^{(1)}$, but also the factor $F$ in the expression $X = FX^{(1)}$ in a~normalized coordinate system is a
f\/irst integral of~$X$ and~$X^{(1)}$. In~\cite{Zung-Nondegenerate2015}, this kind of linearization is called
\textit{geometric linearization}.

The formal and analytic case of the above theorem also holds and was shown in \cite{Zung-Nondegenerate2015}
in a~more general context of integrable non-Hamiltonian systems of type $(p,q)$, i.e., with $p$ commuting
vector f\/ields and $q$ common f\/irst integrals, where $p+q = n$ is the dimension of the manifold. The vector f\/ields
that we study in this paper are integrable of type $(1,n-1)$, i.e., just one vector f\/ield and $n-1$ f\/irst integrals.

The nondegeneracy condition in Theorem \ref{thm:SmoothLinearization} is a bit stronger than the nondegeneracy
condition in \cite{Zung-Nondegenerate2015}: in \cite{Zung-Nondegenerate2015} the (formal or analytic)
vector f\/ield $X$ is called integrable \emph{nondegenerate} if it satisf\/ies the above conditions i) and ii), without the
need of condition iii). (A priori, the condition that ${\rm d}F_1 \wedge \dots \wedge {\rm d}F_{n-1}(O) \neq 0$ is quite stronger than the condition that the $\infty$-jet of ${\rm d}F_1 \wedge \dots \wedge {\rm d}F_{n-1}$ at~$O$ is not zero.) However, in fact, in the formal and analytic case, condition~iii) is a~simple consequence of the f\/irst two conditions and the theorem about the existence of (formal or analytic) Poincar\'e--Dulac normalization
\cite{Zung-Poincare2002,Zung-Nondegenerate2015}. On the other hand, in the smooth case,
we don't have a proof of the fact that condition~iii) follows
from conditions~i) and~ii) in general, though we do have a proof of this fact for dimension~2.

The rest of this paper is organized as follows. Section~\ref{section2} is devoted to some preliminary
results, including the classif\/ication
of nondegenerate singularities of completely integrable vector f\/ields into (strong/weak)
elliptic and hyperbolic cases (Lemma~\ref{lem:HypOrEl}), and
the normalization up to a f\/lat term (Proposition \ref{prop:LinearizationUpToFlat}). These preliminary results
are used in the proof of Theo\-rem~\ref{thm:SmoothLinearization} which is presented in Section~\ref{section3}.
Finally, in Section~\ref{section4}, we show that, at least in the case $n=2$, condition~iii) of
Theorem~\ref{thm:SmoothLinearization} is a consequence of the f\/irst two conditions, and can be dropped
from the formulation of the theorem (Theorem~\ref{thm:2Dsmoothlinearization}). We conjecture that
condition iii) is redundant in the higher-dimensional case as well.

This paper is part of our program of systematic study of the geometry and topology
of integrable non-Hamiltonian systems. In particular, Theorem~\ref{thm:2Dsmoothlinearization},
which is a ref\/inement of Theo\-rem~\ref{thm:SmoothLinearization} in the case of dimension~2,
is the starting point of our joint work with Nguyen Van Minh on the
local and global smooth invariants of integrable dynamical
systems on 2-dimensional surfaces~\cite{ZungMinh_2D2012}. In connection
with our results, we would
like to mention the theorem of Cha\-pe\-ron on smooth equivalence of formally equivalent weakly
hyperbolic systems \cite{Chaperon-WeakHyperbolic2013}, and its recent application to smooth
geometric linearization of some classes of integrable non-Hamiltonian
systems by Jiang~\cite{Jiang-WeakHyp2016}. We believe that Chaperon's techniques will be
a key element in our smooth linearization problem, see also~\cite{Zung-Integrable2016}.

\section{Preliminary results}\label{section2}

\subsection{Adapted f\/irst integrals}\label{subsection:adapted}

We have the following simple lemma, which is similar to
the well-known Ziglin's lemma \cite{Ziglin-Branching1982}.

\begin{lem}
 Let $G_1,\dots,G_m$ be $m$ formal series in $n$ variables which are functionally independent. Then
there exist $m$ polynomial functions of $m$ variables $P_1,\dots, P_m$ such that the homogeneous
$($i.e., lowest degree$)$ parts of the formal series of $P_1(G_1,\dots,G_m)$, $\dots$, $P_m(G_1,\dots,G_m)$ are functionally independent.
\end{lem}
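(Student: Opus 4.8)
The plan is to reduce the statement to a transcendence-degree computation on leading forms. Write $d_i = \mathrm{ord}(G_i)$ for the lowest degree occurring in $G_i$ and $g_i$ for the corresponding homogeneous leading part, and let $\mathcal{L}$ denote the set of all leading forms $\mathrm{lead}\bigl(P(G_1,\hdots,G_m)\bigr)$ obtained from polynomials $P$ in $m$ variables. The key observation is that it suffices to exhibit $m$ \emph{functionally independent} elements of $\mathcal{L}$: if $\ell_i = \mathrm{lead}\bigl(P_i(G_1,\hdots,G_m)\bigr)$ for $i=1,\hdots,m$ are functionally independent, then these $P_i$ are exactly the polynomials required by the lemma. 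Two elementary facts then convert the problem into commutative algebra. First, the functional independence of the $G_i$ forces their \emph{algebraic} independence: a nontrivial polynomial relation $R(G_1,\hdots,G_m)=0$ of minimal degree would, upon differentiation, give $\sum_i (\partial_{u_i}R)(G)\, dG_i = 0$ with not all coefficients vanishing (char.\ $0$, minimality), contradicting $dG_1\wedge\hdots\wedge dG_m\neq 0$. Second, for homogeneous polynomials functional and algebraic independence coincide in characteristic zero. Thus the lemma is equivalent to the assertion that $\mathcal{L}$ contains $m$ algebraically independent forms, i.e.\ that passing to the associated graded of the order filtration does not lower the transcendence degree below $m$.

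The main line of argument is an induction on $m$. By the inductive hypothesis applied to $G_1,\hdots,G_{m-1}$, I may replace these by suitable polynomial combinations so that their leading parts $g_1,\hdots,g_{m-1}$ are already functionally (hence algebraically) independent, while $G_1,\hdots,G_m$ remain algebraically independent. If $g_m$ is functionally independent from $g_1,\hdots,g_{m-1}$ we are done. Otherwise $g_m$ is algebraically dependent on them, and I extract a \emph{weighted-homogeneous} relation: assigning weight $d_i$ to the $i$-th variable and decomposing any algebraic relation into weighted-homogeneous components, each component $R_\delta$ satisfies $R_\delta(g_1,\hdots,g_m)=0$ separately (distinct total degrees cannot cancel), and one may choose a component that genuinely involves $u_m$, so that $\partial R_\delta/\partial u_m\not\equiv 0$. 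Replacing $G_m$ by $\widetilde{G}_m := R_\delta(G_1,\hdots,G_m)$ then (i) keeps the tuple algebraically independent, since $\partial_{u_m}R_\delta\neq 0$ makes the substitution dominant; (ii) yields $\widetilde{G}_m\neq 0$ by algebraic independence of the $G_i$; and (iii) \emph{strictly increases the order}, because the weighted-homogeneous leading contribution cancels exactly along the relation $R_\delta(g)=0$. This cancellation is precisely the mechanism that exposes new, higher-order directions invisible at the level of the original leading forms.

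The step I expect to be the main obstacle is \textbf{termination}: each replacement raises $\mathrm{ord}(\widetilde{G}_m)$, but one must show that after finitely many steps the leading part becomes genuinely independent of $g_1,\hdots,g_{m-1}$, rather than the orders marching off to infinity while dependence persists. I would control this through the finite invariant $\mu := \mathrm{ord}\bigl(dG_1\wedge\hdots\wedge dG_m\bigr)$, which is a nonnegative integer because the $m$-form is nonzero. Tracking the identity $d\,\widetilde{G}_m\wedge\bigwedge_{i<m}dG_i = J(G)\cdot dG_1\wedge\hdots\wedge dG_m$, where $J$ is the relevant Jacobian factor, ties the order of the transformed Jacobian $m$-form to $\mu$, and one argues that a functionally independent direction must surface at an order bounded in terms of $\mu$ and the $d_i$; hence only finitely many cancellation steps are possible. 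Equivalently, and this is the cleanest formulation of the obstacle, the claim is that the order valuation restricted to $\mathbb{R}(G_1,\hdots,G_m)$ is Abhyankar, so that its associated graded field has transcendence degree exactly $m$ and not merely the a priori bound $\leq m$. Making this equality rigorous — either by the explicit bound on the number of steps or by verifying the Abhyankar condition directly for the order valuation of a regular local ring — is the delicate core of the proof, whereas the reduction and the cancellation construction above are routine.
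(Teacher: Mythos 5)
Your overall route is the same one the paper takes: the paper gives no self-contained argument but defers to Ziglin's lemma, and Ziglin's proof is precisely your inductive scheme of replacing $G_m$ by weighted-homogeneous polynomial combinations $R_\delta(G_1,\ldots,G_m)$ that kill the leading-form relation, each replacement strictly raising the order. Your reduction steps (functional independence $\Leftrightarrow$ algebraic independence in characteristic zero, decomposition of a relation into weighted-homogeneous components each of which vanishes on the leading forms, choice of a component involving $u_m$) are all correct.

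The genuine gap is the termination step, and it is the actual mathematical content of the lemma rather than a technicality that can be left as an ``obstacle'': as written, your identity $dG_1\wedge\cdots\wedge dG_{m-1}\wedge d\widetilde G_m=\partial_{u_m}R_\delta(G)\cdot dG_1\wedge\cdots\wedge dG_m$ proves nothing by itself, because the order of the left-hand side and the order of $\widetilde G_m$ both grow with each step, so no contradiction ever appears, and the Abhyankar reformulation is a restatement of the problem, not a solution. The fix needs one refinement you omitted plus one computation. Refinement: among the weighted-homogeneous relations involving $u_m$, choose $R_\delta$ of \emph{minimal degree in $u_m$}; then $\partial_{u_m}R_\delta(g_1,\ldots,g_m)\neq 0$, since otherwise $\partial_{u_m}R_\delta$ would be either a relation of smaller $u_m$-degree still involving $u_m$, or a nontrivial relation among the independent $g_1,\ldots,g_{m-1}$. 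This pins down the order exactly: $\mathrm{ord}\,\partial_{u_m}R_\delta(G)=\deg_w R_\delta-d_m$, where $d_m=\mathrm{ord}\,G_m$ and $\deg_w$ is the weighted degree. Computation: with $\mu:=\mathrm{ord}\bigl(dG_1\wedge\cdots\wedge dG_m\bigr)$, the identity gives $\mu_{\mathrm{new}}=\mu_{\mathrm{old}}+\deg_w R_\delta-d_m^{\mathrm{old}}$, while $d_m^{\mathrm{new}}\geq \deg_w R_\delta+1$ because the weighted-leading term cancels; hence $\mu_{\mathrm{new}}-d_m^{\mathrm{new}}\leq \mu_{\mathrm{old}}-d_m^{\mathrm{old}}-1$. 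On the other hand, since every entry of the $i$-th row of the Jacobian matrix has order at least $d_i-1$, every $m\times m$ minor has order at least $\sum_i(d_i-1)$, so $\mu-d_m\geq \sum_{i<m}(d_i-1)-1$, a bound that does not change during the process (the first $m-1$ functions are frozen). A strictly decreasing integer bounded below forces termination after at most $\mu_0-d_m^{0}-\sum_{i<m}(d_i-1)+1$ steps, at which point the leading form of the current $G_m$ is algebraically, hence functionally, independent of $g_1,\ldots,g_{m-1}$. With this inserted your proof is complete and coincides with the argument the paper cites.
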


The proof of the above lemma follows exactly the same lines as the proof of Ziglin of his lemma in
\cite{Ziglin-Branching1982}, and our situation is simpler than the situation of meromorphic
functions considered by Ziglin.

Let $X$ be a smooth completely integrable vector f\/ield with a singularity at $O$.
We will say that the smooth f\/irst integrals $F_1, \dots, F_{n-1}$ of $X$
are \emph{adapted} f\/irst integrals if
\begin{gather*}
 {\rm d}H_1 \wedge \dots \wedge {\rm d}H_{n-1} \neq 0 \quad \text{a.e.},
\end{gather*}
where $H_i = F_i^{(h_i)}$ denotes the homogeneous part (consisting of non-constant terms of lowest degree in the Taylor
expansion) of $F_i$ at $O$.
Using the above lemma to replace the f\/irst integrals $F_1, \dots, F_{n-1}$ of $X$ by appropriate
polynomial functions of them if necessary, from now on we can assume that $F_1,\dots, F_{n-1}$ are adapted.

\subsection[The eigenvalues of $X$]{The eigenvalues of $\boldsymbol{X}$}

The fact that $X$ admits $n-1$ f\/irst integrals implies that $X$ is very resonant at $O$. More precisely, we have:

\begin{lem} \label{lem:HypOrEl}
 Let $(X,F_1,\dots,F_{n-1})$ be smooth nondegenerate at $O$, i.e., they satisfy the conditions of Theorem~{\rm \ref{thm:SmoothLinearization}}.
Then the linear part of $X$ at $O$ is semisimple, and there is a positive number $\lambda > 0$ such that either all the eigenvalues of $X$ at $O$ belong to $\lambda \bbZ$, or all of them belong to $\sqrt{-1} \lambda \bbZ$.
\end{lem}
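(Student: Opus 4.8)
The plan is to reduce everything to the linear part of $X$ and then read off the constraints that $n-1$ functionally independent first integrals impose on its eigenvalues. Write $X = X^{(1)} + (\text{higher order})$, where $X^{(1)}$ is the linear vector field with matrix $A$. If $F$ is a first integral of $X$ with lowest-degree homogeneous part $H$, then expanding $X(F)=0$ by degree gives $X^{(1)}(H)=0$; thus the adapted homogeneous parts $H_1,\dots,H_{n-1}$, which we may assume functionally independent by the adapted-integral reduction above, are functionally independent homogeneous polynomial first integrals of $X^{(1)}$. First I would record the elementary but crucial fact that these are simultaneously first integrals of the semisimple and nilpotent parts of $X^{(1)}$: letting $A = S + N$ be the Jordan--Chevalley decomposition and $D_A, D_S, D_N$ the induced derivations of the polynomial algebra, one has $[D_S,D_N]=0$, with $D_S$ semisimple and $D_N$ nilpotent on each finite-dimensional space $\mathcal P_d$ of degree-$d$ homogeneous polynomials, so $D_S|_{\mathcal P_d} + D_N|_{\mathcal P_d}$ is the Jordan decomposition of $D_A|_{\mathcal P_d}$ and hence $\ker D_A = \ker D_S \cap \ker D_N$ on $\mathcal P_d$. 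In particular each $H_i$ is killed by both $X^{(1)}_s$ (matrix $S$) and $X^{(1)}_n$ (matrix $N$).

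The core of the argument, and the step I expect to be the main obstacle, is proving that the linear part is semisimple, i.e. $N=0$. The tool is a dimension count: if finitely many commuting flows have a joint orbit of dimension $d$ through a point $x$, then every common first integral has differential vanishing on the $d$-dimensional orbit tangent space, so at most $n-d$ of them can be functionally independent near $x$. I would apply this to the commuting flows $e^{tS}$ and $e^{sN}$, which both preserve $H_1,\dots,H_{n-1}$ by the previous paragraph; their joint orbit through $x$ has tangent space spanned by $Sx$ and $Nx$. It therefore suffices to produce a single $x$ at which $Sx$ and $Nx$ are linearly independent whenever $N\neq 0$ and $S\neq 0$, for then the generic orbit is $2$-dimensional and at most $n-2$ first integrals can be independent, contradicting the existence of $n-1$. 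Diagonalizing $S$ over $\bbC$ and using that $N$ preserves each eigenspace, the only delicate case is a nilpotent Jordan block sitting over the eigenvalue $0$ of $S$, where $Sx$ vanishes; here I would choose $x = w + x_0$ with $w$ an eigenvector for a nonzero eigenvalue and $x_0$ in the zero-eigenspace with $Nx_0\neq 0$, so that $Sx$ and $Nx$ lie in complementary coordinate subspaces and are automatically independent. This disposes of every Jordan block and forces $N=0$.

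With $A=S$ now semisimple, I would complexify and diagonalize, writing $X^{(1)} = \sum_j \lambda_j z_j \partial_{z_j}$. A homogeneous first integral is a combination of resonant monomials $z^m$ with $\langle m,\lambda\rangle = 0$ and $m\in\bbZ_{\geq 0}^n$, so the transcendence degree of the field of first integrals equals the rank of the group generated by the resonant monoid, which is at most the rank $n - \dim_{\mathbb{Q}}\langle \lambda_1,\dots,\lambda_n\rangle$ of the full resonance lattice. Functional independence of $H_1,\dots,H_{n-1}$ forces this to be at least $n-1$, whence $\dim_{\mathbb{Q}}\langle\lambda_1,\dots,\lambda_n\rangle \leq 1$; since condition ii) gives $S\neq 0$, this dimension is exactly $1$, so all eigenvalues lie on a single line $\mathbb{Q}\mu\subset\bbC$ and, after clearing denominators, in $\lambda_0\bbZ$ for some $\lambda_0\in\bbC$ with $\lambda_0\neq 0$.

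Finally I would use that $A$ is real, so its spectrum is invariant under complex conjugation. Picking a nonzero eigenvalue $\alpha$, its conjugate $\bar\alpha$ is again an eigenvalue and lies on the same real line $\bbR\alpha$, so $\bar\alpha = c\alpha$ with $c\in\bbR$. Writing $\alpha = a + b\sqrt{-1}$ this gives $a(1-c)=0$ and $b(1+c)=0$: either $b=0$ and all eigenvalues are real, or $c=-1$, $a=0$ and all eigenvalues are purely imaginary. In the first case the eigenvalues lie in $\lambda\bbZ$ and in the second in $\sqrt{-1}\,\lambda\bbZ$ for a suitable $\lambda>0$, which is exactly the claimed dichotomy.
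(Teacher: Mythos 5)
Your proof is correct, and its overall skeleton matches the paper's: reduce to the homogeneous parts $H_i$ via the lowest-degree expansion of $X(F_i)=0$, use the rank of the lattice of solutions of the resonance equation $\sum_i a_i\lambda_i=0$ in $\bbZ^n_{\geq 0}$ to force all eigenvalues into $\lambda_0\bbZ$ for a single $\lambda_0\neq 0$, and get the real-versus-imaginary dichotomy from invariance of the spectrum under complex conjugation (your $\bar\alpha=c\alpha$ computation and the paper's observation that $\lambda_1$ and $\overline{\lambda}_1$ cannot both lie in $\lambda\bbZ$ are the same argument). The genuine divergence is in the semisimplicity step, and there your version is actually more complete than the paper's. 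The paper argues that both $X^{(1)}$ and $X^{ss}$ annihilate the $H_i$, hence are tangent to their common level sets, which are $1$-dimensional almost everywhere, so $X^{(1)}\wedge X^{ss}\equiv 0$; from this it simply asserts $X^{(1)}=X^{ss}$, leaving unproved the linear-algebra implication that $Sx\wedge (S+N)x\equiv 0$ with $S\neq 0$, $N$ nilpotent and commuting with $S$, forces $N=0$. Your route supplies exactly this missing piece: identifying $D_S+D_N$ as the Jordan decomposition of $D_A$ on each finite-dimensional space of homogeneous polynomials shows the nilpotent part separately kills every $H_i$, and your Jordan-block construction (a point in an eigenspace of $S$ with nonzero eigenvalue when $N$ is nonzero there, or $x=w+x_0$ straddling the zero-eigenspace otherwise) produces a point, hence an open set, where $Sx$ and $Nx$ are independent, so all $n-1$ differentials $dH_i$ annihilate a $2$-plane there and the polynomial form $dH_1\wedge\cdots\wedge dH_{n-1}$ vanishes identically, contradicting functional independence. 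Both arguments are at bottom the same dimension count ($n-1$ independent integrals leave room only for $1$-dimensional invariant dynamics), but yours turns the paper's one-line assertion into a proof. Two details worth stating explicitly in a write-up: independence of $Sx$ and $Nx$ at one point is an open condition, which is what lets you kill the polynomial wedge identically; and since you diagonalize over $\bbC$, you should note that vanishing of the complexified wedge form implies vanishing of the real one.
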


\begin{proof} We can assume that $H_1,\dots,H_{n-1}$ are functionally independent, where $H_i$ denotes the homogeneous part
of $F_i$. The equality $X(F_i) = 0$ implies that
\begin{gather*}
 X^{ss}(H_i) = X^{(1)}(H_i) = 0 \qquad \forall \, i=1,\dots,n-1,
\end{gather*}
where $X^{(1)}$ is the linear part of $X$, and $X^{ss}$ is the semisimple part of $X^{(1)}$ in the Jordan decomposition
(see, e.g.,~\cite{Zung-Poincare2002,Zung-Nondegenerate2015}). We can write
\begin{gather*}
 X^{ss} = \sum_{i=1}^n \lambda_i z_i \frac{\partial}{\partial z_i}
\end{gather*}
in a complex coordinate system. Recall that the ring of polynomial f\/irst integrals of $\sum\limits_{i=1}^n \lambda_i z_i \frac{\partial}{\partial z_i}$ is generated by the monomial functions $\prod\limits_{i=1}^n z_i^{a_i}$, which satisfy the resonance relation
\begin{gather} \label{eqn:Resonance}
 \sum_{i=1}^n a_i \lambda_i = 0.
\end{gather}
The fact that $H_1,\dots, H_{n-1}$ are independent implies that equation \eqref{eqn:Resonance} has $n-1$ linearly
independent solutions which belong to $\bbZ^n_+$, which in turn implies that there is a complex number~$\lambda$ such that $\lambda_1,\dots, \lambda_n \in \lambda \bbZ$. Remark that if the spectrum of $X^{ss}$ contains
a complex eigenvalue $\lambda_1 \in \bbC \setminus (\bbR \cup \sqrt{-1}\bbR)$, then its complex conjugate $\overline \lambda_1$
is also in the spectrum because $X$ is real, and $\lambda_1$ and $\overline \lambda_1$ cannot belong to $\lambda\bbZ$
at the same time for any $\lambda$. Thus any eigenvalue of $X^{ss}$ is either real or pure imaginary. If there is one real
non-zero eigenvalue, then we can choose $\lambda \in \bbR_+$, otherwise we can choose $\lambda \in \sqrt{-1}\bbR_+$.
Notice that $\lambda \neq 0$ because at least one eigenvalue of $X^{ss}$ is non-zero by our assumptions.

The common level sets of $H_1, \dots, H_{n-1}$ are 1-dimensional almost everywhere, and since both~$X^{(1)}$
and~$X^{ss}$ are tangent to these common level sets, we have that $X^{(1)} \wedge X^{ss} = 0$, which implies that
$X^{(1)}$ is semisimple, i.e., $X^{(1)} = X^{ss}$.
\end{proof}

With the above lemma, we can divide the problem into 4 cases (here $\bbR^* = \bbR \setminus \{0\}$):
\begin{enumerate}\itemsep=0pt
\item[I.] Strongly hyperbolic (or hyperbolic without eigenvalue 0): $\lambda_i \in \lambda\bbR^*$ $\forall\, i$.
\item[II.] Weakly hyperbolic (or hyperbolic with eigenvalue 0): $\lambda_i \in \lambda\bbR^* $ $\forall\, i > k \geq 1$,
$\lambda_1 = \dots = \lambda_k = 0$.
\item[III.] Strongly elliptic (or elliptic without eigenvalue 0): $\lambda_i \in \sqrt{-1}\lambda\bbR^*$ $\forall\, i$.
\item[IV.] Weakly elliptic (or elliptic with eigenvalue 0): $\lambda_i \in \sqrt{-1}\lambda\bbR^*$ $\forall\, i > k \geq 1$,
$\lambda_1 = \dots = \lambda_k$ $= 0$.
\end{enumerate}

\subsection{Linearization up to a f\/lat term} Using the geometric linearization theorem of \cite{Zung-Nondegenerate2015}
in the formal case, we get the following proposition:

\begin{prop}[linearization up to a f\/lat term] \label{prop:LinearizationUpToFlat}
 Assume that $X$ satisfies the hypotheses of Theorem~{\rm \ref{thm:SmoothLinearization}}. Then there is a local smooth
coordinate system $(x_1,\dots,x_n)$ in which $X$ can be written as
\begin{gather*}
X = FX^{(1)} + \text{\rm f\/lat},
\end{gather*}
where $X^{(1)}$ is the linear part of $X$ in the coordinate system $(x_1,\dots,x_n)$,
$F$ is a smooth first integral of $X^{(1)}$, and $\text{\rm f\/lat}$ means a smooth term which is flat at $O$.
\end{prop}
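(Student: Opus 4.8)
The plan is to prove the statement entirely at the level of $\infty$-jets, invoking the formal geometric linearization theorem of \cite{Zung-Nondegenerate2012}, and then to descend back to the smooth category by Borel's theorem. First I would pass to Taylor expansions at $O$: write $\hat X$ for the $\infty$-jet of $X$ and $\hat F_1,\dots,\hat F_{n-1}$ for the $\infty$-jets of the first integrals. These jets satisfy $\hat X(\hat F_i)=0$, and by hypothesis (ii) they are functionally independent, so $(\hat X,\hat F_1,\dots,\hat F_{n-1})$ is a formally nondegenerate integrable system and the formal version of Theorem \ref{thm:SmoothLinearization} applies. It produces a formal diffeomorphism $\hat\phi$ fixing $O$ with $\hat\phi_*\hat X=\hat F\,\hat X^{(1)}$, where (by Lemma \ref{lem:HypOrEl}) $\hat X^{(1)}$ is semisimple linear, $\hat F$ is a formal first integral of $\hat X^{(1)}$, and $\hat F(O)=1$.

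Next I would realize $\hat\phi$ by an honest local smooth diffeomorphism $\phi$ whose $\infty$-jet at $O$ equals $\hat\phi$; such a $\phi$ exists by Borel's theorem, applied componentwise, and is a genuine diffeomorphism since its linear part is that of $\hat\phi$. In the new smooth coordinate system the $\infty$-jet of $X$ is then exactly $\hat F\,\hat X^{(1)}$. Because $\hat F$ has constant term $1$ and $\hat X^{(1)}$ is homogeneous of degree one, the homogeneous degree-one part of $\hat F\,\hat X^{(1)}$ is $\hat X^{(1)}$; hence the linear part $X^{(1)}$ of $X$ in these coordinates is precisely the semisimple $\hat X^{(1)}$, as the statement requires.

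The remaining, and genuinely delicate, step is to replace the formal factor $\hat F$ by an actual smooth first integral $F$ of $X^{(1)}$ having the \emph{same} $\infty$-jet; this matching is exactly what is needed, since $(\hat F-\mathrm{jet}_O F)\,\hat X^{(1)}=0$ forces $\mathrm{jet}_O F=\hat F$ (as $\hat X^{(1)}\neq 0$ and multiplication by a coordinate is injective on formal series). Here I would use the structure of the invariant ring. In a complex diagonalizing system $X^{(1)}=\sum_i\lambda_i z_i\,\partial/\partial z_i$, whose polynomial first integrals are spanned by the resonant monomials $z^a$ with $a$ in the monoid $\Lambda=\{a\in\bbZ^n_+ : \sum_i a_i\lambda_i=0\}$. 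By Gordan's lemma $\Lambda$ is finitely generated, so finitely many (real) polynomial first integrals $m_1,\dots,m_r$ generate the whole ring. Since each $m_j$ vanishes at $O$, every formal first integral, in particular $\hat F$, can be written as $\hat F=\hat\Phi(m_1,\dots,m_r)$ for a formal power series $\hat\Phi$ in $r$ variables, whose coefficients are solved for degree by degree. Applying Borel's theorem once more to $\hat\Phi$ gives a smooth function $\Phi$ with $\infty$-jet $\hat\Phi$, and I would set $F:=\Phi(m_1,\dots,m_r)$.

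Finally, $F$ is an exact smooth first integral of $X^{(1)}$ as a smooth function of the exact first integrals $m_j$, it satisfies $F(O)=\Phi(0)=1$, and its $\infty$-jet equals $\hat\Phi(m_1,\dots,m_r)=\hat F$ because the $m_j$ are polynomials vanishing at $O$. Consequently $X$ and $FX^{(1)}$ share the same $\infty$-jet at $O$, so $X-FX^{(1)}$ is flat, giving the desired decomposition. I expect the main obstacle to be precisely this construction of $F$: the formal theorem yields only a formal first integral, and upgrading it to a genuine smooth first integral with the correct jet is what requires the finite generation of the resonance monoid. In the elliptic case one could instead average a Borel realization over the torus generated by $X^{(1)}$, but the monoid-generator argument has the advantage of treating the hyperbolic and elliptic cases uniformly.
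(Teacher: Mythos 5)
Your proposal is correct and follows essentially the same route as the paper: pass to $\infty$-jets, apply the formal geometric linearization theorem of \cite{Zung-Nondegenerate2012}, express the formal factor $\hat F$ as a formal series in polynomial generators of the invariant ring of $X^{(1)}$, and use Borel's theorem twice (for the coordinate change and for the series) to produce the smooth coordinates and the smooth first integral $F$. The only cosmetic difference is that you justify the finite generation of the invariant ring via Gordan's lemma on the resonance monoid, where the paper invokes the Hilbert--Weyl theorem; these are the same underlying fact in this setting.
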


\begin{proof}
Denote by $\hat{X}$ (resp.~$\hat F_i$) the $\infty$-jet of $X$ (resp. $F_i$) at $O$: $\hat{X}$ is a formal vector f\/ield (resp. function)
at $O$. If $(X,F_1,\dots,F_{n-1})$ is smooth nondegenerate at $O$, then
$(\hat{X},\hat{F}_1,\dots,\hat{F}_{n-1})$ is a nondegenerate formal integrable system of type $(1,n-1)$ at $p$.
According to the geometric linearization theorem of \cite{Zung-Nondegenerate2015},
this formal integrable system can be linearized geometrically, i.e., there is a formal coordinate system
$(\hat x_1,\dots, \hat x_n)$ in which we have
\begin{gather} \label{eqn:FormalLinearization}
\hat X = \hat F \hat X^{(1)},
\end{gather}
 where $\hat X^{(1)}$ is the linear part of $\hat X$ in the formal coordinate system
$(\hat x_1,\dots, \hat x_n)$, and $\hat F$ is a~formal f\/irst integral of $\hat X^{(1)}$.
By the classical Hilbert--Weyl theorem (see, e.g., Theorem~4.2 of Chapter~XII of \cite{GSS-Bifurcation1988})
applied to the torus action associated to $X^{(1)}$ (see \cite{Zung-Poincare2002,Zung-Nondegenerate2015} for this associated torus action), we can write
\begin{gather} \label{eqn:f}
 \hat F = \hat f \big(Q_1(\hat x_1,\dots, \hat x_n),\dots, Q_m(\hat x_1,\dots, \hat x_n)\big),
\end{gather}
where $\hat f$ is a formal series and
$Q_1(\hat x_1,\dots, \hat x_n),\dots, Q_m(\hat x_1,\dots, \hat x_n)$ are homogeneous polynomials
generating the ring of polynomial f\/irst integrals of $\hat X^{(1)}$.
Using Borel theorem, we get a~smooth coordinate system $(x_1,\dots,x_n)$ whose $\infty$-jet is
$(\hat x_1,\dots, \hat x_n)$, and a smooth function~$f$ of~$m$ variables whose $\infty$-jet is $\hat f$. Put
\begin{gather} \label{eqn:F}
F (x_1,\dots,x_n) = f\big(Q_1(x_1,\dots,x_n),\dots,Q_m(x_1,\dots,x_n)\big).
\end{gather}
Then equations \eqref{eqn:FormalLinearization}, \eqref{eqn:f} and~\eqref{eqn:F} imply that $X = FX^{(1)} + \text{f\/lat}$
in the smooth coordinate system $(x_1,\dots,x_n)$.
\end{proof}

\subsection{Reduction to the case without eigenvalue 0}\label{subsection:ReductionToParametrizedCase}

Assume that $X$ has zero eigenvalue at $O$ with multiplicity $k$, and ${\rm d}F_1 \wedge \dots \wedge {\rm d}F_k = 0$,
i.e., we can use $F_1,\dots,F_k$ as the f\/irst $k$ coordinates in our local coordinate systems. Since
the vector f\/ield $X$ preserves $x_1,\dots,x_k$, we can view it as a $k$-dimensional family of vector f\/ields
on $(n-k)$-dimensional spaces
\begin{gather*}
 U_{c_1,\dots,c_k} = \{F_1 = c_1, \dots, F_k = c_k\}
\end{gather*}
(for $c_1,\dots,c_k$ small enough). It follows from the
usual implicit function theorem and the nondegeneracy condition
that on each $U_{c_1,\dots,c_k}$ there is a unique
point $O_{c_1,\dots,c_k}$ such that $X(O_{c_1,\dots,c_k})$ $= 0$, and moreover the point $O_{c_1,\dots,c_k}$
depends smoothly on $c_1,\dots,c_k$, the eigenvalues of $X$ at $c_1,\dots,c_k$, are non-zero.
It also follows from the formal independence of $F_1,\dots,F_n$ at $O$, that the
functions $F_{k+1},\dots,F_n$ are formally independent at every point $O_{c_1,\dots,c_k}$ provided
that $c_1,\dots,c_k$ are suf\/f\/iciently small. In other words, we have a $k$-dimensional family of nondegenerate
singularities of smooth completely integrable $(n-k)$-dimensional vector f\/ields $X_{c_1,\dots,c_k}$.
In order to normalize $X$, it suf\/f\/ices to normalize $X_{c_1,\dots,c_k}$ in a way which depends smoothly
on the parameter.

\section{Proof of Theorem \ref{thm:SmoothLinearization}}\label{section3}

We will always assume that the vector f\/ield
$X$ satisf\/ies the hypotheses of Theorem \ref{thm:SmoothLinearization}.
The fact that the linear part of $X$ is semisimple is established by Lemma~\ref{lem:HypOrEl}.
In view of Section~\ref{subsection:ReductionToParametrizedCase}, it suf\/f\/ices to prove
Theorem~\ref{thm:SmoothLinearization} for the cases without zero eigenvalue, by a proof whose
parametrized version also works the same.

\subsection{The hyperbolic case} Assume that $X$ is hyperbolic without eigenvalue 0.
According to Proposition \ref{prop:LinearizationUpToFlat}, we can write $X = Y + \text{f\/lat}$,
where $Y = FX^{(1)}$ is a smooth hyperbolic integrable vector f\/ield in normal form. Since
$X$ and $Y$ are hyperbolic and coincide up to a f\/lat term, Sternberg--Chen theorem
\cite{Chen-Vector1963, Sternberg} says that $X$ is locally smoothly isomorphic to $Y$, i.e.,
there is a smooth coordinate system in which $X$ can be written as $X = F X^{(1)}$, where $F$
is a smooth f\/irst integral of~$X^{(1)}$. Theorem~\ref{thm:SmoothLinearization} is proved in the
hyperbolic case without eigenvalue 0. This is also a special case
of a result of Kai Jiang~\cite{Jiang-WeakHyp2016} on smooth linearization of
weakly hyperbolic integrable vector f\/ields.

\subsection{The elliptic case} In this subsection, we will assume that all the eigenvalues of $X$ at $O$
are non-zero pure imaginary. Using Proposition \ref{prop:LinearizationUpToFlat}, we can assume that
$X = FX^{(1)} + \text{f\/lat}$ in a local smooth coordinate system $(x_1,\dots,x_n)$, where $F$ is a smooth
function such that $F(O) =1$. Put $Y = X/F$. Then $Y$ has the same f\/irst integrals as $X$, and
\begin{gather*} %\label{eqn:3-1}
 Y = X^{(1)} + \text{f\/lat}.
\end{gather*}

The fact that $X$ is of strong elliptic type implies immediately that the dimension $n$ is even, the eigenvalues of $X$
at $O$ are $\pm \sqrt{-1} a_1,\dots, \pm \sqrt{-1} a_{n/2}$ where $a_1, \dots, a_{n/2}$ are positive real numbers,
and we can choose the coordinates $(x_1,\dots,x_n)$ such that
\begin{gather} \label{eqn:LinearElliptic}
 X^{(1)} = \sum_{i=1}^{n/2} a_i \left(x_{2i-1} \frac{\partial}{\partial x_{2i}} - x_{2i} \frac{\partial}{\partial x_{2i-1}}\right).
\end{gather}
According to Lemma \ref{lem:HypOrEl}, we can choose $\lambda > 0$ such that $a_1/\lambda, \dots, a_{n/2}/ \lambda$
are natural numbers whose greatest common divisor is 1.

\begin{lem} \label{lem:AllPeriodic}
Locally near $O$ all the orbits of $Y = X/F$ $($except the fixed point $O)$ are periodic, with periods which
are uniformly bounded above and below.
\end{lem}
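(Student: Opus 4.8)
The plan is to exploit the two structures carried by $Y$ simultaneously: on one hand $Y=X/F$ has exactly the same orbits as $X$, hence the same $n-1$ functionally independent first integrals $F_1,\hdots,F_{n-1}$; on the other hand $Y=X^{(1)}+flat$, where the elliptic linear field $X^{(1)}$ of \eqref{eqn:LinearElliptic} has a \emph{periodic} flow. Indeed, since each $a_i/\lambda$ is a positive integer with $\gcd(a_1/\lambda,\hdots,a_{n/2}/\lambda)=1$ (Lemma \ref{lem:HypOrEl}), the flow of $X^{(1)}$ is periodic of minimal period $T_0=2\pi/\lambda$, and every orbit of $X^{(1)}$ other than $O$ has minimal period in the fixed interval $[2\pi/\max_i a_i,\,2\pi/\lambda]$. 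Moreover $|X^{(1)}(x)|\geq(\min_i a_i)\,|x|\geq\lambda|x|$, while the flat remainder is $o(|x|^N)$ for every $N$; hence $Y$ has no zero other than $O$ in a small punctured neighborhood of $O$.

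First I would fix $x\neq O$ close to $O$, take a small hypersurface $\Sigma$ through $x$ transverse to $Y(x)\neq0$, and study the first-return (Poincaré) map $P$ of the flow of $Y$ to $\Sigma$. Because $Y-X^{(1)}$ is flat, a Gronwall estimate on the interval $[0,T_0]$ shows that the orbit of $Y$ through $x$ stays flatly close to the circle $\{\phi^{X^{(1)}}_t(x)\}$, so it crosses $\Sigma$ again after a return time $\tau(x)$ close to the (bounded) period of the linear orbit through $x$; in particular $P$ is well defined near $x$ and $\tau(x)$ stays in a fixed interval $[T_{\min},T_{\max}]$ with $T_{\min}>0$. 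The key point is that $P$ \emph{preserves} each $F_i$, since the $F_i$ are first integrals of $Y$. Now wherever $dF_1\wedge\hdots\wedge dF_{n-1}\neq0$, the common kernel of the $dF_i$ is exactly the line $\bbR\,Y$ (as $Y$ lies in every $\ker dF_i$), and since $Y$ is transverse to $\Sigma$ one checks by elementary linear algebra that $dF_1|_\Sigma,\hdots,dF_{n-1}|_\Sigma$ remain independent. Thus $(F_1,\hdots,F_{n-1})$ restricts to a local coordinate system on $\Sigma$ near $x$, and since $P$ preserves these coordinates we get $P(x)=x$: the orbit of $Y$ through $x$ is periodic, with period $\tau(x)\in[T_{\min},T_{\max}]$.

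The condition $dF_1\wedge\hdots\wedge dF_{n-1}\neq0$ only holds on a dense open set $U$, so the argument proves periodicity only for $x\in U$. To reach every orbit I would pass to the limit: given $x_0\notin U$, choose $x_j\to x_0$ with $x_j\in U$ and periods $\tau_j\in[T_{\min},T_{\max}]$; extracting a convergent subsequence $\tau_j\to\tau^*\geq T_{\min}>0$ and using continuity of the flow gives $\phi^Y_{\tau^*}(x_0)=\lim_j\phi^Y_{\tau_j}(x_j)=\lim_j x_j=x_0$, so $x_0$ too lies on a periodic orbit of period at most $T_{\max}$. This yields periodicity of all orbits with periods uniformly bounded above and below.

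The hard part will be making the Poincaré construction \emph{uniform} as $x\to O$: the transversal, the domain of $P$, and the Gronwall bound must be controlled on a single punctured neighborhood rather than point by point. The natural way to achieve this is to rescale (blow up) by $x=\eps u$ with $|u|=1$: the rescaled field $\eps^{-1}Y(\eps u)$ converges to $X^{(1)}(u)$ together with all its derivatives as $\eps\to0$, precisely because the remainder is flat, so the return data depend continuously on $\eps$ down to $\eps=0$, where they are governed by the genuinely periodic field $X^{(1)}$. This uniformity simultaneously delivers the well-definedness of $P$ near $O$ and the two-sided bound on the periods.
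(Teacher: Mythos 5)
Your overall skeleton --- a Poincar\'e return map on a transversal $\Sigma$, preservation of $(F_1,\hdots,F_{n-1})$ by that map, injectivity of $(F_1,\hdots,F_{n-1})|_\Sigma$ forcing $P(x)=x$, and a limiting argument for the remaining points --- is the same as the paper's, and your blow-up remark correctly addresses uniformity as $x\to O$. But there is a genuine gap at the step where you claim $P(x)=x$ at \emph{every} regular point $x$. That conclusion requires the displacement $|P(x)-x|$ to be smaller than the radius of the neighborhood of $x$ in $\Sigma$ on which $(F_1,\hdots,F_{n-1})|_\Sigma$ is injective. Flatness of $Y-X^{(1)}$ at $O$ gives $|P(x)-x|\le C_N|x|^N$ for every $N$, but nothing bounds that injectivity radius from below near the singular locus $S=\{dF_1\wedge\hdots\wedge dF_{n-1}=0\}$: the hypotheses only say this set has measure zero, and in general it is a positive-dimensional set through $O$, not just the origin. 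Since the $F_i$ are merely smooth, at a regular point $x$ at distance $\delta$ from $S$ the Jacobian of $(F_i)|_\Sigma$ (hence the injectivity radius) can be as small as $e^{-1/\delta}$, which is not bounded below by any power of $|x|$. Rescaling $x=\eps u$ does not repair this: after blow-up the singular set is still present and accumulates on the exceptional divisor, and near it the same mismatch between a displacement that is flat in $\eps$ and an arbitrarily degenerate injectivity radius persists. So your argument proves periodicity only for regular points whose distance to $S$ is at least a fixed power of $|x|$; such points are \emph{not} dense (no point of $S\setminus\{O\}$ is a limit of them), so the subsequent limit argument cannot reach all orbits either.

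This region near $S$ is precisely what the paper's proof is organized around, and it uses two ingredients absent from your proposal. First, writing $(dF_1\wedge\hdots\wedge dF_{n-1})\lrcorner(\frac{\partial}{\partial x_1}\wedge\hdots\wedge\frac{\partial}{\partial x_n})=GY$, a Rolle-type argument in a suitable linear coordinate system (where $\partial^h G/\partial z_1^h\neq0$ throughout $U$) shows that $S$ has dimension at most $n-1$ and that $G$ is non-flat at \emph{every} point of $S$; the quantitative return-map argument is then run only at well-chosen points $q_\eps$ on the $z_1$-axis, where $|G(q_\eps)|\gtrsim\eps^h$ supplies the polynomial lower bound on the injectivity radius that you are missing (with the ball radius $\eps^K$, $K$ large, beating the flat displacement $\eps^{K+1}$). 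Second, periodicity is propagated from these points by an open-closed argument inside the connected component $V$ of $U\setminus S$ containing them (no quantitative bound is needed there, since one perturbs around an orbit already known to be closed), and then \emph{across} $S$ by an engulfing argument: at $q\in S\cap\partial V$ the return map is the identity plus a term flat at $q$, and it preserves $(F_1,\hdots,F_{n-1})$ whose Jacobian determinant is non-flat at $q$; these two facts together force the return map to be the identity on a whole neighborhood of $q$ in the transversal, so periodicity spreads past $S$ into the adjacent components, and iterating fills up $U$. The non-flatness of $G$ along all of $S$ is the key fact your proof lacks; without it, or some substitute for it, the orbits through regular points close to $S$ (and hence, by density, through points of $S$ itself) remain out of reach.
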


\begin{proof}
 The vector f\/ield $({\rm d}F_1 \wedge \dots \wedge {\rm d}F_{n-1})
\lrcorner \big(\frac{\partial}{\partial x_1} \wedge \dots \wedge \frac{\partial}{\partial x_n}\big)$
is tangent to $Y$, and therefore it is divisible by $Y$ (by de~Rham division theorem,
because of the nondegeneracy condition, see, e.g., Appendix~A.2 of~\cite{DufourZung-Poisson2005}; in that appendix the de~Rham theorem is given
for dif\/ferential forms, but it works the same for vector f\/ields, via an isomorphism between the tangent bundle and the contangent bundle over
a manifold), i.e., we can write
\begin{gather} \label{eqn:3-3}
 ({\rm d}F_1 \wedge \dots \wedge {\rm d}F_{n-1})
\lrcorner \left(\frac{\partial}{\partial x_1} \wedge \dots \wedge \frac{\partial}{\partial x_n}\right) = GY,
\end{gather}
where $G$ is a smoth non-f\/lat function at $O$. Notice that the singular locus of the map $(F_1,\dots,$ $F_{n-1})\colon U \to \bbR^{n-1}$,
where $U \ni O$ is a small neighborhood of $O$ in $\bbR^n$, coincides with the zero locus of~$G$.

It is clear that, by continuity, the set of all points $x \in U$ such that the orbit of $Y$ through $x$ is periodic of period $\leq 3\pi/\lambda$
is a closed subset of $U$. We want to show that this set is actually equal to $U$ (provided that $U$ is small enough).
Consider the singular locus
\begin{gather*}
 S = \{x \in U \, | \,G(x) = 0 \} = \{x \in U \, | \,{\rm d}F_1 \wedge \dots \wedge {\rm d}F_{n-1} (x) = 0 \}.
\end{gather*}
Since $G$ is non-f\/lat at $O$, we can choose a coordinate system $(z_1,\dots, z_n)$ which is a linear transformation of the
coordinate system $(x_1,\dots,x_n)$, such that the homogeneous part $G^{(h)}$ of~$G$ has the form
\begin{gather} \label{eqn:3-5}
 G^{(h)} = z_1^h + \cdots
\end{gather}
(where $h$ denotes the degree of $G^{(h)}$), which implies that $\frac{\partial^h G}{\partial z_1^h} \neq 0$ in $U$.

Because $\frac{\partial^h G}{\partial z_1^h}$ does not vanish in $U$, by the classical Rolle's theorem on each line $\{z_2={\rm const}, \dots, z_n = {\rm const}\}$ in $U$ there are at most $h$ zeros of the function $G$, the intersection of the singular locus $S$ with each line $\{z_2={\rm const}, \dots, z_n = {\rm const}\}$ in $U$ consists of at most $n-1$ points, and the function $G$ is not f\/lat at any point of $S$.

Due to the ellipticity of the vector f\/ield $X^{(1)}$, there must be at least
one index $j \neq 1$ such that
the coef\/f\/icient of the monomial term $z_1 \frac{\partial}{\partial z_j}$ in $X^{(1)}$ in the coordinate system $(z_1,\dots, z_n)$ is not zero.
Without loss of generality, we may assume that the coef\/f\/icient of
the monomial term $z_1 \frac{\partial}{\partial z_n}$ in $X^{(1)}$
is not zero.

Consider the local hyperplane
\begin{gather*} P = \{z_n = 0\} \subset U\end{gather*}
with the coordinate system $(z_1,\dots, z_{n-1})$.
For each $\epsilon > 0$ suf\/f\/iciently small, denote by
\begin{gather*} q_{\epsilon} = (z_1= \epsilon, z_2=0,\dots,z_n=0) \in P\end{gather*}
the point in $U$ whose coordinate $z_1$ is equal to $\epsilon$
and the other coordinates $z_i$ vanish for every $i \geq 2$.
Then the vector f\/ields $X^{(1)}$ and $Y$ are transversal to $P$
at every point $q_{\epsilon}$ such that $\epsilon > 0$ is suf\/f\/iciently
small.

Recall from Section \ref{subsection:adapted} that we can, and will,
assume that $F_1,\dots, F_n$ to be adapted f\/irst integrals, i.e.,
the homogeneous part of ${\rm d}F_1 \wedge \dots \wedge {\rm d} F_n$ is equal to
${\rm d}H_1 \wedge \dots \wedge {\rm d} H_n$, where $H_i = F_i^{(h_i)}$ is the homogeneous part
of $F_i$ for each $i=1,\dots, n-1$ and $\deg H_i = h_i$.
In particular, we have
$ h + 1 = \sum\limits_{i=1}^{n-1} (h_i - 1)$,
where $h$ is the degree of the homogeneous part~$G^{(h)}$
of the function $G$ in formulas~\eqref{eqn:3-3} and~\eqref{eqn:3-5},
and $h+2 \geq h_i$ for any $i=1,\dots, n-1$.

Denote by $B^{n-1}(q_\epsilon, \epsilon^{h+2})$ the
$(n-1)$-dimensional ball of center $q_\epsilon$ and radius $\epsilon^{h+2}$
on the local hyperplane $ P = \{z_n = 0\}$. We observe that, for every
$\epsilon $ suf\/f\/iciently small, the restriction of the map
$(F_1,\dots, F_{n-1})$ to $B^{n-1}(q_\epsilon, \epsilon^{h+2})$
is an injective map
from $B^{n-1}(q_\epsilon, \epsilon^{h+2})$ to $\mathbb{R}^{n-1}$.

Indeed, consider any two distinct points ${\bf p},{\bf q} \in B^{n-1}(q_\epsilon, \epsilon^{h+2})$, ${\bf p} \neq {\bf q}$.
Consider the constant unit vector f\/ield $Z_{{\bf p},{\bf q}} = \frac{{\bf q} - {\bf p}}{\|{\bf q} - {\bf p} \|}$
on $U$ with respect to the coordinate system $(z_1,\dots, z_n)$ which maps
$U$ onto a neighborhood of the origin in the Euclidean space $\mathbb{R}^n$.
The dif\/ference ${\bf q} - {\bf p}$ and the norm $\|{\bf q} - {\bf p} \|$
(i.e., the distance from ${\bf p}$ to ${\bf q}$) are taken with respect
to this Euclidean structure. Due to the fact that our
f\/irst integrals are adapted, at least one of the functions
$Z_{{\bf p},{\bf q}}(F_1), \dots, Z_{{\bf p},{\bf q}}(F_{n-1})$,
say $Z_{{\bf p},{\bf q}}(F_k)$, has a homogeneous part of the type
\begin{gather*} c_{{\bf p},{\bf q}} z_1^{h_k -1} + \cdots,
\end{gather*}
with a coef\/f\/icient $c_{{\bf p},{\bf q}} \neq 0$, and $k$ can be chosen
as a function of ${\bf p}$ and ${\bf q}$ such that $|c_{({\bf p},{\bf q})}|$
is uniformly bounded by positive constants. Using standard division techniques
(see, e.g., the Malgrange's preparation
theorem \cite{Malgrange-Ideal1967}, but the situation here is much simpler than this general theorem), one can decompose $Z_{{\bf p},{\bf q}}(F_k)$ as
\begin{gather*} Z_{{\bf p},{\bf q}}(F_k) = f_{1, {\bf p},{\bf q}}z_1^{h_k-1}
+ \sum_{i=2}^n z_i f_{i, {\bf p},{\bf q}},\end{gather*}
where $f_{i, {\bf p},{\bf q}}$ are smooth functions on $U$ depending uniformly
continuously on the unit vector~$Z_{{\bf p},{\bf q}}$ (as long as the index $k$ remains the same), and with $f_{1, {\bf p},{\bf q}}(0) = c_{{\bf p},{\bf q}}$.
For the points on $B^{n-1}(q_\epsilon, \epsilon^{h+2})$ we have that
$\epsilon - \epsilon^{h+2} \leq z_1 \leq \epsilon + \epsilon^{h+2}$
while $|z_i| \leq \epsilon^{h+2} \leq \epsilon^{h_k}$ for all $i \geq 2$,
hence in the above expression of $Z_{{\bf p},{\bf q}}(F_k)$ the terms
$z_i f_{i, {\bf p},{\bf q}}$ with $i \geq 2$ are very small compared to the
term $f_{1, {\bf p},{\bf q}}z_1^{h_k-1}$, and so in particular the sign of
$Z_{{\bf p},{\bf q}}(F_k)$ does not change on $B^{n-1}(q_\epsilon, \epsilon^{h+2})$ for~$\epsilon$ suf\/f\/iciently small, which implies that
$F_k({\bf q}) - F_k({\bf p}) \neq 0$ by the mean value theorem. Thus, we have shown the injectivity of $(F_1,\dots, F_{n-1})$ on $B^{n-1}(q_\epsilon, \epsilon^{h+2})$ for every $\epsilon$ suf\/f\/iciently small.

Consider now the Poincar\'e map (i.e., the f\/irst return map), denoted by $\phi$,
def\/ined on the ball $B^{n-1}(q_\epsilon, \epsilon^{h+2}) \subset P$
of the f\/low of $Y$ in $U$
(a priori the image of this map may lie a bit outside of
$B^{n-1}(q_\epsilon, \epsilon^{h+2})$ but on the same
local hyperplane $P$). A priori this map does not necessarily f\/ix
the point $q_\epsilon$. But due to the fact the f\/low of $X^{(1)}$ is
periodic (in particular, the Poincar\'e map for $X^{(1)}$ is the identity map)
and the fact that $Y = X^{(1)} + \text{f\/lat}$ (which implies that the
Poincar\'e map $\phi$ for $Y$ deviates from the Poincar\'e map for $X$ by a
f\/lat term), we have that the distance from $q_\epsilon$ to
$\phi(q_\epsilon)$ is a f\/lat function in $\epsilon$. In particular,
for every $\epsilon$ suf\/f\/iciently small we have
\begin{gather*}
{\rm d}(q_\epsilon , \phi(q_\epsilon)) < \epsilon^{h+2},\end{gather*}
where $d$ denotes the Euclidean distance in the coordinate system $(z_1,\dots, z_n)$, and hence the point
$\phi(q_\epsilon)$ lies in the
$(n-1)$-dimensional ball $B^{n-1}(q_\epsilon, \epsilon^{h+2})$.

Due to the invariance of the functions $F_i$ with respect to
the vector f\/ield $Y$,
and hence with respect to the Poincar\'e map $\phi$, we also have that
the points $q_\epsilon$ and $\phi(q_\epsilon)$ have the same image
under the map $(F_1,\dots,F_{n-1})$. But this map is injective on the ball $B^{n-1}(q_\epsilon, \epsilon^{h+2})$
which contains these two points, so in fact these two points must coincide,
i.e., we have $q_\epsilon = \phi(q_\epsilon)$,
and the orbit of the f\/low of $Y$ through the point $q_\epsilon$ is a periodic orbit, and the period of this orbit
is equal to $2\pi/\lambda$ plus a small error term which tends to~0 faster
than any power of $\epsilon$ when~$\epsilon$ tends to~0.

Denote by $V$ the path-connected component of $U \setminus S$ which contains the points $q_\epsilon$. ($V$ is not equal $U \setminus S$ in general). Then
the orbit of $Y$ through any point $q \in V$ is also periodic and its period is close to $2\pi/\lambda$
(the dif\/ference between the period and $2\pi/\lambda$ tends to 0 uniformly when the radius of $U$ tends to 0). This
fact can be proved easily by showing that the set of points of~$V$ which satisf\/ies the mentioned property is closed
and open in~$V$ at the same time: closed due to the continuity, and open because $(F_1,\dots,F_{n-1})$
is regular in~$V$ and is preserved by the f\/low of~$Y$.

Let $q \in S$ be a point in the locus $S$ which also lies on the boundary of $V$. Then by continuity, there is also
a number $T$ near $2\pi/\lambda$ such that the time-$T$ f\/low of $Y$ f\/ixes the point $q$. In other words, the orbit
of~$Y$ through~$q$ is also periodic, and the period is equal to~$T$ or a fraction $T/m$ of $T$ for some natural
number~$m$. As before, consider a $(n-1)$-dimensional
ball $B^{n-1}(q, \delta)$ which is centered at $q$ and orthogonal to $Y(q)$, for some $\delta > 0$ small enough.
Consider the Poincar\'e map $\phi$ of $Y$ on $B^{n-1}(q, \delta)$ corresponding to the time $T$ (i.e., if the period of the
orbit through~$q$ is $T/m$ then consider the $m$-time iteration of the usual Poincar\'e map). Since the intersection of
${B^{n-1}(q, \delta)}$ with $V$ contains an open subset of $B^{n-1}(q, \delta)$ whose closure contains~$q$, and
the Poincar\'e map is identity on that open subset by the above considerations, the Poincar\'e map on $B^{n-1}(q, \delta)$
is equal to the identity map plus a f\/lat term at $q$. On the other hand, this Poincar\'e map must preserve the
map $(F_1,\dots,F_{n-1})|_{B^{n-1}(q, \delta)}$, and the determinant of the dif\/ferential of this map is not f\/lat at $q$.
It implies that the Poincar\'e map must be identity in a small neighborhood of $q$ in ${B^{n-1}(q, \delta)}$. Thus, we can ``engulf'' the set of points shown to have periodic orbits from $V$ to a larger open subset of $U$ which contains the boundary of $V$. Continuing this
engulf\/ing process, we get that the set of points in $U$ having periodic orbits is actually the whole~$U$.
\end{proof}

\looseness=-1 We will linearize $Y = X/F$ orbitally, and then deduce the normalization of $X$ from this linearization. In order to do that,
let us consider the blow-up of $\bbR^n$ at $O$, which will be denoted by
\begin{gather*}
 p\colon \ E \to U,
\end{gather*}
where $U \ni O$ is a neighborhood of $O$ in $\bbR^n$ and $p^{-1}(O) \cong \bbR \bbP^{n-1}$ is the exceptional
divisor of the blow-up in $E$. We will need the following simple lemma, whose proof is straightforward:

\begin{lem} \label{lem:blowup}
With the above notations, a function $G$ or a vector field $Z$ is flat at $O$ in $U$ if and only if its pull-back to
$E$ via the projection map $p$ is flat along $p^{-1}(O)$ in $E$.
\end{lem}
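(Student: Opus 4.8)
The plan is to reduce everything to a single elementary characterization of flatness together with the explicit affine charts of the blow-up, treating functions first and deducing the vector-field case from them. First I would record the growth characterization of flatness: for a smooth function, flatness at $O$ (the vanishing of all partial derivatives at $O$) is equivalent to the estimate $\abs{G(x)} = o(\abs{x}^N)$ as $x\to O$ for every $N$; likewise, flatness of a smooth object along a submanifold $M$ (all derivatives vanishing at every point of $M$) is equivalent to the locally uniform estimate that the object is $o(d^N)$ for every $N$, where $d$ is the distance to $M$. One implication is Taylor's theorem with integral remainder in the normal direction; the converse uses that a homogeneous polynomial which is $o$ of its own degree must vanish. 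Since flatness of functions, and of vector fields coefficient by coefficient, is preserved under smooth coordinate changes, it suffices to check the lemma in the standard affine charts of $E$.

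Next I recall those charts: in the $i$-th chart, with coordinates $(u_1,\dots,u_n)$, the projection reads $x_i = u_i$ and $x_j = u_i u_j$ for $j\ne i$, and the divisor is $\{u_i = 0\}$. Here $\abs{x}^2 = u_i^2\bigl(1+\sum_{j\ne i}u_j^2\bigr)$, so on any bounded region of the chart $\abs{x}$ is comparable to $\abs{u_i}$, which is in turn comparable to the distance from $u$ to the divisor; and given $x\ne O$ near $O$, choosing $i$ with $\abs{x_i}=\max_j\abs{x_j}$ places $p^{-1}(x)$ in the compact region $\{\abs{u_j}\le 1,\ j\ne i\}$ with $\abs{u_i}=\abs{x_i}$ comparable to $\abs{x}$. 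For the forward implication for functions, suppose $G$ is flat at $O$; then every derivative $\partial_x^\beta G$ is again flat, hence continuous with value $0$ at $O$. By the chain rule, each $\partial_u^\alpha(G\circ p)$ is a finite sum of products of locally bounded derivatives of $p$ with factors $(\partial_x^\beta G)\circ p$; since $p(u)\to O$ as $u$ tends to the divisor, every such factor tends to $0$ there, so $\partial_u^\alpha(G\circ p)$ vanishes on $\{u_i=0\}$, i.e. $G\circ p$ is flat along the divisor. For the reverse implication, if $G\circ p$ is flat along the divisor then Taylor with remainder in $u_i$ and the growth characterization give $\abs{(G\circ p)(u)}=O(\abs{u_i}^{N+1})$ uniformly on the compact region for every $N$; composing with $p^{-1}$ and using $\abs{u_i}\asymp\abs{x}$ yields $\abs{G(x)}=O(\abs{x}^{N+1})$ for all $N$, hence $G$ is flat at $O$.

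The vector-field case then follows from the function case once the lift is written out. Writing $Z=\sum_j Z^j\,\partial/\partial x_j$, a direct computation in the $i$-th chart, using $\partial/\partial x_i=\partial_{u_i}-\frac{1}{u_i}\sum_{j\ne i}u_j\partial_{u_j}$ and $\partial/\partial x_l=\frac{1}{u_i}\partial_{u_l}$, gives the lifted field $\widetilde Z=(Z^i\circ p)\,\partial_{u_i}+\frac{1}{u_i}\sum_{l\ne i}\bigl((Z^l\circ p)-(Z^i\circ p)\,u_l\bigr)\,\partial_{u_l}$. The one point needing care is the factor $1/u_i$: I would invoke the Hadamard-type division lemma that a smooth function flat along $\{u_i=0\}$ equals $u_i$ times a smooth function that is again flat along $\{u_i=0\}$. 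Granting this, flatness of all $Z^j$ at $O$ makes each coefficient of $\widetilde Z$ flat along the divisor; conversely, flatness of $\widetilde Z$ forces $Z^i\circ p$ flat (the $\partial_{u_i}$-coefficient), and after multiplying the $\partial_{u_l}$-coefficient by $u_i$ it forces each $Z^l\circ p$ flat, whence every $Z^j$ is flat at $O$ by the function case, i.e. $Z$ is flat at $O$.

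I expect the main obstacle—such as it is—to be purely bookkeeping: the equivalence between pointwise flatness and the growth estimates $o(d^N)$, and the smooth divisibility by $u_i$ used in the vector-field case. Everything else is the explicit chart description of the blow-up together with the comparison $\abs{x}\asymp\abs{u_i}$, which is why the statement can fairly be called straightforward.
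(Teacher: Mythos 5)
Your proof is correct and complete. The paper itself offers no proof of this lemma (it is stated as ``straightforward''), and your argument---working in the standard affine charts of the blow-up, characterizing flatness by the growth estimates $O(\abs{x}^N)$ resp.\ $O(\abs{u_i}^N)$, handling functions by the chain rule and the comparison $\abs{x}\asymp\abs{u_i}$ on the compact chart regions, and reducing vector fields to functions via the explicit lifted frame together with Hadamard division by $u_i$---is exactly the standard argument the paper leaves to the reader, with all the details filled in properly.
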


(In the above lemma, $G$ and $Z$ are arbitrary, and the pull back from
$U$ to $E$ means the con\-ti\-nuous extention from $E \setminus p^{-1}(O)$
to $E$ of the pull-back by the dif\/feomorphism $p\colon E \setminus p^{-1}(O)$ $\to U \setminus \{O\}$, if such a continuous extension exists.)

Denote by $\tilde G$ (resp.~$\tilde Z$) the pull-back of a function $G$ (resp.\ vector f\/ield $Z$) via the projection
map $\pi\colon E \to U$ of the blow-up. Then we have
\begin{gather*}
\tilde Y = \tilde X^{(1)} + \tilde Z
\end{gather*}
in $E$, where $\tilde Z$ is vector f\/ield which is f\/lat along $p^{-1}(O)$, and $\tilde X^{(1)}$ is a smooth periodic vector f\/ield
in $E$ of period $2\pi/\lambda$. By Lemma~\ref{lem:AllPeriodic}, the orbits of $\tilde Y$ are closed, with periods close
to the period of $\tilde X^{(1)}$. Due to the f\/latness of $Z$ along $p^{-1}(O)$, the period of $\tilde Y$ at the points in $E$
is equal to $2\pi/\lambda$ plus a smooth function on $E$ which is f\/lat along $p^{-1}(O)$. Projecting $\tilde Y$ back to~$U$
and using Lemma~\ref{lem:blowup}, we get a smooth period function $P = 2\pi/\lambda + \text{f\/lat}$
(which is invariant on the orbits)
such that $PY$ is periodic of period~1.
In other words, $PY$ generates a smooth $\bbT^1$-action. Using the classical Cartan--Bochner
smooth linearization theorem
for compact group actions, we f\/ind a smooth coordinate system, which we will denote again by $(x_1,\dots,x_n)$,
in which $PX/F = PY$ is a linear vector f\/ield, i.e., in which we have
\begin{gather} \label{eqn:OrbitalLinearizationElliptic}
X = GX^{(1)},
\end{gather}
where $G$ is a smooth function and $X^{(1)}$ is a linear vector f\/ield which satisf\/ies formula \eqref{eqn:LinearElliptic}.

A priori, the function $F$ given by Proposition~\ref{prop:LinearizationUpToFlat} is not a f\/irst integral of $X$
(though it is a f\/irst integral of the linear part of $X$ in some coordinate system), and so the function
$G = 2\pi F/P \lambda$ in formula~\eqref{eqn:OrbitalLinearizationElliptic} is not a f\/irst integral of $X$ either.
But we can normalize further in order to change~$G$ into a f\/irst integral. Indeed, by the arguments presented above,
we can assume that~$G$ is a smooth f\/irst integral of $X$ plus a~f\/lat term, or we can write $G = G_1 (1+ \text{f\/lat})$, where
$G_1$ is a~f\/irst integral of $X$. Normalizing the new vector f\/ield $Y = X/G_1$ instead of the old $Y = X/F$, we get a~new smooth coordinate system in which $PY = (2\pi/\lambda) X^{(1)}$, where $P$ is the period function of the new vector f\/ield $Y$,
and it is a smooth f\/irst integral of the type $\text{const} + \text{f\/lat}$. In this new coordinate system we have that $X$ is equal to
its linear part times a f\/irst integral, and Theorem~\ref{thm:SmoothLinearization}
is proved in the elliptic case, i.e., without eigenvalue ~0.

Since our proof for the strong hyperbolic case and the strong elliptic case also works for smooth families of integrable vector f\/ields,
Theorem~\ref{thm:SmoothLinearization} is proved.

\begin{Remark}
According to a theorem of Schwarz \cite{Schwarz-Invariant1975},
the smooth f\/irst integral $F$ in the normal form in the elliptic case can also be written as
\begin{gather*}
 F = f\big(Q_1(x_1,\dots,x_n), \dots, Q_m(x_1,\dots,x_n)\big),
\end{gather*}
where $Q_1(x_1,\dots,x_n), \dots, Q_m(x_1,\dots,x_n)$ are homogeneous polynomials which generate
the ring of polynomial f\/irst integrals of the linear vector f\/ield $X^{(1)}$.
\end{Remark}

\section{The case of dimension 2}\label{section4}

The aim of this section is to show that condition iii) in Theorem~\ref{thm:SmoothLinearization}
is redundant at least in the case of dimension~2. More precisely, we have:

\begin{thm} \label{thm:2Dsmoothlinearization}
Let $X$ be a smooth vector field in a neighborhood of $O = (0,0)$ in $\bbR^2$,
which vanishes at $O$ and satisfies the following conditions:
\begin{enumerate}\itemsep=0pt
\item[$i)$] $($complete integrability$)$: $X$ admits a smooth first integral $F_1$;
\item[$ii)$] $($nondegeneracy$)$: the semisimple part of the linear part of $X$ at $O$ is non-zero, and the $\infty$-jet
of $F_1$ at $O$ is non-constant.
Then there exists a local smooth coordinate system $(x,y)$ in which $X$ can be written as
\begin{gather*}
X = F X^{(1)},
\end{gather*}
where $X^{(1)}$ is a semisimple linear vector field in $(x,y)$, and $F$ is a smooth first
integral of~$X^{(1)}$.
\end{enumerate}
\end{thm}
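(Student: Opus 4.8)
The plan is to show that in dimension two conditions i) and ii) already force condition iii) of Theorem~\ref{thm:SmoothLinearization}, so that the conclusion follows from that theorem applied to a suitably chosen first integral. First I would run the case division of Lemma~\ref{lem:HypOrEl}. In dimension two the eigenvalues are either a pair in $\lambda\bbR$, a purely imaginary conjugate pair, or include a zero; if neither eigenvalue vanishes then $k=0$ and condition iii) is vacuous, so Theorem~\ref{thm:SmoothLinearization} applies verbatim in the strong hyperbolic and strong elliptic subcases. The weak elliptic subcase cannot occur, since in dimension two a zero eigenvalue is real and then its partner is real as well, hence cannot be a nonzero purely imaginary number; and $k=2$ is impossible since it would kill the semisimple part. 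Thus the only case to treat is the weak hyperbolic one, where $X^{(1)}=\mu y\,\partial_y$ with $\mu\neq0$ and $k=1$; here condition iii) asks precisely that some smooth first integral have nonvanishing differential at $O$.

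Next I would pass to the formal Poincaré--Dulac normal form $\hat X=g(x)\partial_x+y\,h(x)\partial_y$ with $h(0)=\mu\neq0$ and $g=O(x^2)$. Writing a formal first integral as $\sum_j\phi_j(x)\,y^j$, the equation $\hat X(\sum_j\phi_j y^j)=0$ decouples into $g\phi_j'+jh\phi_j=0$ for each $j$. Since $h$ is a unit and $g=O(x^2)$, a one-line order count shows that if $g\not\equiv0$ then $\phi_j=0$ for $j\ge1$ and $\phi_0$ is constant, so there is no nonconstant formal first integral. As $F_1$ has nonconstant $\infty$-jet, this forces $g\equiv0$; hence the $\partial_x$-component of $X$ is flat at $O$ and $\widehat{F_1}=\phi_0(x)$ is a nonconstant series in $x$ alone, with lowest term $c_d x^{d}$, $d\ge1$. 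Realizing the normalization smoothly by Borel's theorem, I may assume $X=y\,h(x)\partial_y+R$ with $R$ flat at $O$ and $\widehat{F_1}=\phi_0(x)$.

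The heart of the argument, and the step I expect to be hardest, is to upgrade this formal flatness of the center direction to an honest curve of fixed points by exploiting the \emph{exact} first integral. Let $W^c$ be a center manifold through $O$, tangent to the $x$-axis; it is invariant, so $X|_{W^c}=v(x)\partial_x$ for a smooth $v$, and $G:=F_1|_{W^c}$ has $\infty$-jet $\phi_0(x)$ by the previous step. Restricting $X(F_1)=0$ to $W^c$ gives $v(x)\,G'(x)\equiv0$; but $G'$ has jet $\phi_0'(x)$ with leading term a nonzero multiple of $x^{d-1}$, so $G'(x)\neq0$ for $0<|x|<\eps$, whence $v\equiv0$ and $W^c\subseteq\mathrm{Fix}(X)$. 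Since the $\partial_y$-component $y\,h(x)+R$ has $\partial_y$-derivative $\mu\neq0$ at $O$, its zero set is, by the implicit function theorem, a single smooth curve $\Gamma$ tangent to the $x$-axis, and $\mathrm{Fix}(X)\subseteq\Gamma$; combined with $W^c\subseteq\mathrm{Fix}(X)\subseteq\Gamma$ this forces $\mathrm{Fix}(X)=\Gamma$, a smooth curve of fixed points. The delicate points are the finite regularity and nonuniqueness of $W^c$ (harmless, as it is used only to locate the smooth set $\mathrm{Fix}(X)$) and the passage from the formal $g\equiv0$ to the genuine $v\equiv0$, which is exactly where having an honest, not merely formal, first integral is essential.

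Finally I would straighten $\Gamma$ to $\{y=0\}$ by the smooth tangent-to-identity change $y\mapsto y-w(x)$, which preserves the linear part. Then $X$ vanishes on $\{y=0\}$, so by Hadamard's lemma $X=yZ$ with $Z$ smooth, and since the linear part is $\mu y\,\partial_y$ one has $Z(O)=\mu\,\partial_y\neq0$, i.e. $Z$ is nonvanishing near $O$. Any flow-box first integral $\xi$ of the nonvanishing field $Z$ then satisfies $X(\xi)=y\,Z(\xi)=0$ and $d\xi(O)\neq0$, so $\xi$ is a smooth first integral of $X$ with nonvanishing differential and condition iii) holds for the pair $(X,\xi)$. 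Applying Theorem~\ref{thm:SmoothLinearization} to $(X,\xi)$, equivalently running its weak-hyperbolic reduction of Subsection~\ref{subsection:ReductionToParametrizedCase} followed by the parametrized linearization of the single nonzero eigenvalue, yields a smooth coordinate system in which $X=F\,X^{(1)}$ with $X^{(1)}$ semisimple linear and $F$ a first integral of $X^{(1)}$, completing the proof.
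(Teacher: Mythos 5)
Your proposal is correct, and its skeleton matches the paper's: reduce to the weak hyperbolic case, realize the formal normal form smoothly, prove that the singular locus is a smooth curve, then divide by a defining function and feed a flow-box first integral back into Theorem~\ref{thm:SmoothLinearization} to supply condition iii). (Your explicit Poincaré--Dulac computation with the decoupled equations $g\phi_j' + j h \phi_j = 0$ is a self-contained substitute for the paper's citation of Proposition~\ref{prop:LinearizationUpToFlat}, and your endgame is verbatim the paper's.) Where you genuinely diverge is in the central step, which the paper itself calls ``the main point'': showing that $\mathrm{Fix}(X)$ is a smooth curve. The paper argues elementarily: with the nonzero eigenvalue along $x$, the first integral has the form $F_1 = f(y) + \mathrm{flat}$, its level sets inside the cone $\{|x|\le |y|\}$ are nearly horizontal curves each meeting $S_1 = \{F(y)x + \mathrm{flat}_1 = 0\}$ exactly once, and tangency of $X$ to these level sets together with the vanishing of its $\partial/\partial x$-component on $S_1$ forces $X=0$ there, so $\mathrm{Fix}(X)=S_1$. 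You instead use a center manifold $W^c$, the restriction of the identity $X(F_1)=0$ to it, and the sandwich $W^c \subseteq \mathrm{Fix}(X) \subseteq \Gamma$ between two graphs. Your argument works, and yields as a bonus that the center manifold here is unique and smooth, being equal to $\Gamma$; but it imports center-manifold theory with its finite-regularity caveats --- you need a $C^k$ center manifold with $k \ge d+1$, local invariance, and the standard fact that its Taylor expansion agrees with the (here trivial) formal invariant curve --- whereas the paper's cone argument needs only the implicit function theorem and the level-set geometry of the honest first integral, which is also why, as the paper's closing remark indicates, that argument partially survives in higher dimensions where the singular locus is $k$-dimensional.
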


\begin{proof}
Remark that, in the case of dimension 2, there are only 3 possibilities: elliptic without zero eigenvalue,
hyperbolic without zero eigenvalue, and hyperbolic with zero eigenvalue. The f\/irst two possibilities
are covered by Theorem \ref{thm:SmoothLinearization}. It remains to prove Theorem \ref{thm:2Dsmoothlinearization}
for the case when $X$ has one eigenvalue equal to~0. By Proposition \ref{prop:LinearizationUpToFlat}, we can assume
that
\begin{gather*}
X = F(y) x \frac{\partial}{\partial x} + \text{f\/lat}_1 \frac{\partial}{\partial x} + \text{f\/lat}_2 \frac{\partial}{\partial y}
\end{gather*}
in a smooth coordinate system $(x,y)$, where $\text{f\/lat}_1$ and $\text{f\/lat}_2$ are two f\/lat functions, and $F(0) \neq 0$.

 Denote by
\begin{gather*}
 S = \{ q \in U \, | \, X(q) = 0\}
\end{gather*}
the singular locus of $X$ near $O$, where $U$ denotes a small neighborhood of $O$ in $\bbR^2$. The main point is to prove
that $S$ is a smooth curve. If $S$ is a smooth curve, then we can write $S = \{x=0\}$, the vector f\/ield $X$ is divisible by $x$,
i.e., $Y = X/x$ is still a smooth vector f\/ield, which is non-zero at $O$, and therefore locally rectif\/iable and admits a
f\/irst integral $G$ such that ${\rm d}G(0) \neq 0$. But $G$ is also a f\/irst integral of $X$, so condition iii) of Theorem \ref{thm:SmoothLinearization}
is also satisf\/ied, and Theorem \ref{thm:2Dsmoothlinearization} is reduced to a particular case of Theorem \ref{thm:SmoothLinearization}.

Denote by
\begin{gather*}
 S_1 = \{(x,y) \in U \, | \, F(y)x + \text{f\/lat}_1 (x,y) = 0 \}
\end{gather*}
the set of points where the $\frac{\partial}{\partial x}$-component of $X$ vanishes. It is clear that $S \subset S_1$, and $S_1$ is a~smooth curve
tangent to the line $\{ x= 0 \}$ at $O$ by the inverse function theorem. We will show that $S = S_1$.

Consider the open cone
\begin{gather*}
 C= \{(x,y) \in U \, | \, |x| < |y| \}.
\end{gather*}
Clearly, $S_1 \subset C \cup \{O\}$ (provided that $U$ is small enough). The non-f\/lat f\/irst integral $F_1$ of $X$ in the coordinate system
$(x,y)$ has the type
\begin{gather*}
F_1 = f(y) + \text{f\/lat},
\end{gather*}
where $f(y) = a_hy^h + \text{h.o.t.}$ is a non-f\/lat smooth function. It implies that the level sets of $F_1$ in the cone $C$ are smooth curves (because $\frac{\partial F_1}{\partial y} \neq 0$ in this cone) which are nearly tangent to the lines $\{y= \text{const}\}$ (because $\big|\frac{\partial F_1}{\partial x}\big|$ is very small compared to $\big|\frac{\partial F_1}{\partial y}\big| $ in the cone). In particular, each level set of $F_1$ in $C$ intersects with $S_1$ at exactly 1 point. Since $X$ is tangent to these level sets, and the $\frac{\partial}{\partial x}$-component of $X$ vanishes at the intersection points of these level sets with~$S_1$, it follows that $X$ itself vanishes at these intersection points. But every point of $S_1$ is an intersection point of~$S_1$ with a level set of~$F_1$. Thus $X$ vanishes on $S_1$, and we have $S= S_1$.
\end{proof}

\begin{Remark}
Two-dimensional elliptic-like vector f\/ields, i.e., those vector f\/ields whose orbits near a singular point are closed,
are also called \emph{centers} in the literature. There is a recent
interesting theorem of Maksymenko~\cite{Maksymenko-Symmetries2010} about the orbital linearization of the center,
without the assumption on the existence of a f\/irst integral, but with an assumption on the periods of the periodic
orbits. Maksymenko's theorem is similar to and a bit stronger than the
elliptic case of Theorem \ref{thm:2Dsmoothlinearization} because
his assumptions are weaker, and the conclusions are the same. His proof is also based on the formal normalization
and the blowing-up method.
\end{Remark}

\begin{Remark}
Some of the arguments of the proof of Theorem \ref{thm:2Dsmoothlinearization} are still valid in the $n$-dimensional
case where 0 is an eigenvalue with multiplicity $k \geq 1$. In particular, one can still show that, even without condition iii)
of Theorem \ref{thm:SmoothLinearization}, the local singular locus of $X$ is still a~smooth $k$-dimensional manifold. However,
it is more dif\/f\/icult to show that there is still a local regular invariant $(n-k)$-dimensional foliation. If one can show the existence
of this regular invariant foliation, then one can drop condition iii) from the statement of
Theorem \ref{thm:SmoothLinearization} because it is a consequence of the f\/irst two conditions. Maybe it is possible to use
 the techniques of Belitskii--Kopanskii \cite{BK-Equivariant2002} together with a kind of desingularization of the f\/irst
integrals in order to show the existence of an invariant regular foliation, but we don't have a proof so far.
\end{Remark}

\begin{Remark}
As pointed out by a referee of this paper, there is a less elementary but more
dynamical proof of Theorem \ref{thm:2Dsmoothlinearization} which uses a
$C^r$-central manifold of $X$, where $r$ can be arbitrarily large.
\end{Remark}

\subsection*{Acknowledgement}

The f\/irst version of this manuscript was available since 2012 as an unpublished
preprint (see arXiv:1204.5701v1). It was then revised and submitted during
the author's stay at the School of
Mathematical Sciences, Shanghai Jiao Tong University, as a visiting
professor in 2017. He would like to thank Shanghai Jiao Tong University, and
especially Tudor Ratiu, Jianshu Li, and Jie Hu for the invitation,
hospitality and excellent working conditions.

The authors would also like to thank the referees of this paper for many
pertinent remarks which helped improve the presentation of the paper.

\pdfbookmark[1]{References}{ref}
\LastPageEnding

\end{document}